\documentclass[12pt,reqno]{amsart}
\usepackage{indentfirst, amssymb, amsmath, amsthm, mathrsfs, setspace, indentfirst, enumerate,  mathrsfs, amsmath, amsthm}
\usepackage[bookmarksnumbered, colorlinks, plainpages]{hyperref}
\usepackage{mathrsfs}
\usepackage{cite}
\usepackage{tikz}
\usepackage{graphicx}
\usetikzlibrary{arrows.meta,calc,decorations.pathreplacing,positioning}
\usetikzlibrary{shapes.geometric}
\usepackage{float}
\usepackage{pgfplots}
\usetikzlibrary{shadows}
\pgfplotsset{compat=1.18}
\usetikzlibrary{arrows.meta,positioning,shapes.geometric}
\usepgfplotslibrary{fillbetween}
\definecolor{curveblue}{RGB}{30,90,220}
\definecolor{negred}{RGB}{255,182,193}
\definecolor{posgreen}{RGB}{40,150,70}
\definecolor{rootdark}{RGB}{120,40,20}
\usepgfplotslibrary{fillbetween}
\definecolor{c1}{RGB}{70,40,200}

\definecolor{c2}{RGB}{213,14,0}

\definecolor{c3}{RGB}{0,158,10}

\definecolor{c4}{RGB}{230,126,34}
\newtheorem*{theoA}{Theorem A}
\newtheorem*{theoB}{Theorem B}
\newtheorem*{theoC}{Theorem C}

\newtheorem*{BC}{Bieberbach's conjecture}

\newtheorem*{conjA}{Conjecture A}

\newtheorem*{defiA}{Definition A}
\newtheorem*{defiB}{Definition B}
\newtheorem*{defiC}{Definition C}
\newtheorem*{cor A}{Corollary A}
\newtheorem*{cor B}{Corollary B}

\newtheorem{theo}{Theorem}[section]
\newtheorem{lem}{Lemma}[section]

\newcommand{\be}{\begin{equation}}
	\newcommand{\ee}{\end{equation}}
\newcommand{\beas}{\begin{eqnarray*}}
	\newcommand{\eeas}{\end{eqnarray*}}
\newcommand{\bea}{\begin{eqnarray}}
	\newcommand{\eea}{\end{eqnarray}}

\numberwithin{equation}{section}

\begin{document}
\title[S\MakeLowercase {harp Robertson-Type Coefficient Estimates for Radial}......]{\LARGE S\Large\MakeLowercase {harp} R\MakeLowercase {obertson-Type  Coefficient Estimates for Radial	Starlike and Quasi-Convex of type} B \MakeLowercase {Mappings of order $\alpha$ in Complex}  B\MakeLowercase {anach Spaces}}

\date{}
\author[S. M\MakeLowercase{ajumder}, A. B\MakeLowercase{anerjee} \MakeLowercase{and} G. H\MakeLowercase{aldar}]{S\MakeLowercase{ujoy} M\MakeLowercase{ajumder}$^{1}$, A\MakeLowercase{bhijit} B\MakeLowercase{anerjee}$^2$ \MakeLowercase{and}  G\MakeLowercase{outam} H\MakeLowercase{aldar}$^3$$^*$ }

\address{$^{1}$ Sujoy Majumder, Department of Mathematics, Raiganj University, Raiganj, West Bengal-733134, India.}
\email{sm05math@gmail.com, sjm@raiganjuniversity.ac.in}
\address{$^{2}$ Abhijit Banerjee, Department of Mathematics, University of Kalyani, West Bengal 741235, India.}
\email{abanerjee\_kal@yahoo.co.in, abhijitbanerjee@klyuniv.ac.in}
\address{$^3$ Goutam Haldar, Department of Mathematics, Ghani Khan Choudhury Institute of Engineering and Technology, Narayanpur, Malda-732141, West Bengal, India.}
\email{goutamiitm@gmail.com, goutamiit1986@gmail.com}

\renewcommand{\thefootnote}{}
\footnote{2020 \emph{Mathematics Subject Classification}: 32H02, 30C45.}
\footnote{\emph{Key words and phrases}: Biholomorphic mapping, Bieberbach's conjecture, Starlike mapping of order $\alpha$, Quasi-convex  mapping of Type $B$ and order $\alpha$, Coefficient problems.}
\footnote{*\emph{Corresponding Author}: Goutam Haldar.}

\renewcommand{\thefootnote}{\arabic{footnote}}
\setcounter{footnote}{0}

\begin{abstract}
In this paper, we study biholomorphic mappings $F: \mathbb{B} \to X$ of the form $F(z) = f(z)z$ for $z \in \mathbb{B}$, where $f: \mathbb{B} \to \mathbb{C}$ is a holomorphic function satisfying $f(0) = 1$. Our main contributions are as follows: We establish sharp estimates for the coefficients of
starlike functions of order $\alpha$ as well as quasi-convex functions of type $B$ and order $\alpha$ by employing the notion of majorization.
The results established here directly extend the results due to Robertson \cite{Robertson-1936} to the broader setting of complex Banach spaces.

\end{abstract}
\thanks{Typeset by \AmS -\LaTeX}
\maketitle
\section{\bf Introduction}
Let $\mathcal{S}(\mathbb{U})$ denote the class of functions $f$ that are univalent in the open unit disk $\mathbb{U} = \{z \in \mathbb{C} : \vert{}z\vert{} < 1\}$, normalized by $f(0) = 0$ and $f'(0) = 1$. Any function $f \in \mathcal{S}(\mathbb{U})$ has a power series expansion of the form:
\begin{align}\label{eq1}
	\displaystyle
	f(z) = z + \sum_{n=2}^{\infty} a_n z^n, \quad z \in \mathbb{U}.
\end{align}

By employing the Area Theorem, Bieberbach \cite{Bieberbach-1916} established that if $f \in \mathcal{S}(\mathbb{U})$, then $\lvert a_2 \rvert \leq 2$. The following theorem, introduced in 1916, laid the foundation for the celebrated \textbf{Bieberbach conjecture}.

\begin{theoA}{\em{\cite{Bieberbach-1916}}} If $f \in \mathcal{S}(\mathbb{U})$, then $|a_2|\leq 2$. Equality holds if, and only if, $f$ is a rotation of the Koebe function.
\end{theoA}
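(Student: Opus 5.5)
The plan is to follow Bieberbach's classical route through the Area Theorem. I pass from $f\in\mathcal{S}(\mathbb{U})$ to an odd univalent function via a square-root transform, and then to a univalent function on the exterior of the disk via inversion. Applying the area inequality to that function bounds its first nontrivial coefficient by $1$, which translates into $|a_2|\le 2$.

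\textbf{Step 1 (Area Theorem).} Let $g(\zeta)=\zeta+b_0+\sum_{n\ge1} b_n\zeta^{-n}$ be univalent on $|\zeta|>1$. The image of $|\zeta|>r>1$ omits a compact set $E_r$ bounded by the Jordan curve $g(|\zeta|=r)$. Its area, computed by Green's formula $\frac{1}{2i}\oint \bar w\,dw$, equals
\[
\pi\Bigl(r^2-\sum_{n\ge1} n|b_n|^2 r^{-2n}\Bigr)\ge 0 .
\]
Letting $r\to1^+$ gives $\sum n|b_n|^2\le 1$, and in particular $|b_1|\le1$. Equality $|b_1|=1$ forces $b_n=0$ for all $n\ge2$, so $g(\zeta)=\zeta+b_0+b_1/\zeta$ with $|b_1|=1$.

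\textbf{Step 2 (square-root transform).} For $f\in\mathcal{S}(\mathbb{U})$, the quotient $f(z)/z$ is nonvanishing on $\mathbb{U}$, so $h(z)=\sqrt{f(z^2)}=z\sqrt{f(z^2)/z^2}$ is a well-defined holomorphic branch. It is odd and has expansion $h(z)=z+\tfrac{a_2}{2}z^3+\cdots$. Univalence holds because $h(z_1)=h(z_2)$ gives $f(z_1^2)=f(z_2^2)$, hence $z_1=\pm z_2$, and oddness excludes $z_1=-z_2\neq 0$.

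\textbf{Step 3 (inversion).} Set $g(\zeta)=1/h(1/\zeta)=\zeta-\tfrac{a_2}{2}\zeta^{-1}+\cdots$, which is univalent on $|\zeta|>1$. Step 1 gives $|a_2/2|\le1$, that is, $|a_2|\le 2$.

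\textbf{Equality case.} If $|a_2|=2$, Step 1 forces $g(\zeta)=\zeta-e^{i\theta}/\zeta$. Inverting back gives $h(z)=z/(1-e^{i\theta}z^2)$, and hence
\[
f(z)=\frac{z}{(1-e^{i\theta}z)^2},
\]
which is a rotation of the Koebe function $k(z)=z/(1-z)^2$. Conversely, every such rotation is univalent and has $|a_2|=2$.

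\textbf{Main obstacle.} The delicate point is the rigorous area computation in Step 1. I will argue that $g(|\zeta|=r)$ is an analytic Jordan curve, so Green's theorem applies, and that the orientation is such that the computed quantity is the area of the omitted set, hence nonnegative. I will then justify the limit $r\to1^+$ by monotone convergence applied to the partial sums of $\sum n|b_n|^2 r^{-2n}$.
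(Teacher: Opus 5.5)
Your proposal is correct, and it is exactly the classical Area-Theorem argument (odd square-root transform, inversion to $|\zeta|>1$, area inequality, analysis of the equality case) that the paper attributes to Bieberbach; the paper itself only cites the result and gives no proof. The two implicit steps are immediate: excluding $z_1=-z_2\neq 0$ uses that $h$ vanishes only at $0$, and $b_0=0$ in the equality case follows from the oddness of $g$.
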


{\bf{Bieberbach's conjecture}} served as the catalyst for the study of univalent functions, driving its development over the last century into the extensive field of mathematical research we see today.
\begin{BC} If $f \in\mathcal{S}(\mathbb{U})$ and is given by \eqref{eq1}, then $|a_n|\leq n$ for any integer $n\geq 2$. Equality holds if and only if $f$ is a rotation of the Koebe function.
\end{BC}

This conjecture evolved into one of the most celebrated problems in twentieth century mathematics and it is now called de Branges theorem (see \cite{Branges-1985}). Unfortunately, Cartan \cite{Cartan-1933} demonstrated that this result does not extend to several complex variables. Consequently, imposing additional geometric properties—such as starlikeness or convexity—on mappings within a given class becomes essential to establish analogous estimates. However, regarding the bounds on homogeneous expansions for normalized biholomorphic starlike mappings on the Euclidean unit ball $\mathbb{B}^n \subset \mathbb{C}^n$, Roper and Suffridge \cite{Roper-Suffridge-1999} constructed a counterexample showing that these analogous estimates fail even for $m = 2$.\vspace{2mm}

A normalized univalent function $f \in \mathcal{S}(\mathbb{U})$ is said to be starlike of order $\alpha\;(0\leq \alpha<1)$ if and only if $\Re\left\lbrace z f'(z)/f(z) \right\rbrace>\alpha$, $z \in \mathbb{U}$.
The subclass consisting of all such starlike functions of order $\alpha$ is denoted by $\mathcal{S}^*_{\alpha}(\mathbb{U})$. Let $\mathcal{K}_{\alpha}(\mathbb{U})$ denote the familiar subclass of $\mathcal{S}(\mathbb{U})$ consisting of functions which are convex of order $\alpha\;(0\leq \alpha<1)$ in $\mathbb{U}$. Then $f\in\mathcal{K}_{\alpha}(\mathbb{U})$ if and only if $\Re\left\lbrace 1+z f''(z)/f'(z) \right\rbrace >\alpha$, $z \in \mathbb{U}.$

At first, we recall the following definition.
\begin{defiA} \cite[Definition 5.3.1]{Thomas-Tuneski} For $z \in \mathbb{U}$, let $\phi(z) = \sum_{n=0}^{\infty} \phi_n z^n$ and $\psi(z) = \sum_{n=0}^{\infty} \psi_n z^n$, where $\psi_n \geqslant 0$ for all $n \geqslant 0$. Then $\phi(z)$ is majorized by $\psi(z)$, written as $\phi(z) \ll \psi(z)$, if for $n \geqslant 0$, $|\phi_n| \leqslant \psi_n$.
\end{defiA}

In 1936, Robertson \cite{Robertson-1936} obtained the following sharp estimates for the coefficients of starlike and convex functions of order $\alpha$ by employing the notion of majorization:
\begin{theoB}\cite{Robertson-1936} Let $f(z)$ be given by $f(z) = z + \sum_{n=2}^{\infty} a_n z^n$. If $f\in\mathcal{S}^*_{\alpha}(\mathbb{U})$, then for $n\geq 2$, we have the following sharp coefficient estimates
\begin{align*}
|a_n|\leq \frac{1}{(n-1)!}\prod\limits_{k=2}^n(k-2\alpha).
\end{align*}
\end{theoB}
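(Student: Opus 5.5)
The plan is to write the starlikeness condition as a majorization and then solve the resulting coefficient recurrence by induction. Set
\[
p(z)=\frac{1}{1-\alpha}\left(\frac{zf'(z)}{f(z)}-\alpha\right)=1+\sum_{n=1}^{\infty}c_nz^n .
\]
Since $f\in\mathcal{S}^*_{\alpha}(\mathbb{U})$, $p$ is holomorphic in $\mathbb{U}$ with $p(0)=1$ and $\Re p>0$. The Carathéodory lemma then gives $|c_n|\le 2$ for all $n\ge1$. Put $b_n=(1-\alpha)c_n$, so that $|b_n|\le 2(1-\alpha)$. The identity $zf'(z)=f(z)\bigl(1+\sum_{n\ge1} b_nz^n\bigr)$ becomes, after comparing coefficients of $z^n$ (with $a_1=1$),
\[
(n-1)a_n=\sum_{k=1}^{n-1} b_{n-k}\,a_k,\qquad n\ge 2 .
\]

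Next I bound the $a_n$ by induction. Define $A_1=1$ and, for $n\ge2$,
\[
A_n=\frac{1}{(n-1)!}\prod_{k=2}^n(k-2\alpha).
\]
Suppose $|a_k|\le A_k$ for $k<n$. The recurrence and the bound on the $b_j$ give
\[
(n-1)|a_n|\le 2(1-\alpha)\sum_{k=1}^{n-1}A_k .
\]
It therefore suffices to show
\[
2(1-\alpha)\sum_{k=1}^{n-1}A_k=(n-1)A_n .
\]
This is the one computational step, and I would also prove it by induction on $n$. For $n=2$ both sides equal $2-2\alpha$. For the step from $n$ to $n+1$, the left side grows by $2(1-\alpha)A_n$. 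So I need
\[
nA_{n+1}=(n-1)A_n+2(1-\alpha)A_n=(n+1-2\alpha)A_n ,
\]
which is exactly the definition of $A_{n+1}$, since $A_{n+1}/A_n=(n+1-2\alpha)/n$.

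A cleaner way to see the same thing is through majorization. The numbers $A_n$ are the coefficients of
\[
k_\alpha(z)=\frac{z}{(1-z)^{2(1-\alpha)}} .
\]
This function satisfies $zk_\alpha'/k_\alpha=(1+(1-2\alpha)z)/(1-z)$, whose coefficient sequence is $1,\,2(1-\alpha),\,2(1-\alpha),\dots$. Hence $k_\alpha$ obeys the same recurrence with every $b_j$ replaced by $2(1-\alpha)$, and the induction shows $f\ll k_\alpha$ in the sense of Definition A.

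Sharpness comes from $f=k_\alpha$, or from its rotations $e^{-i\theta}k_\alpha(e^{i\theta}z)$. Here $\Re\{zf'/f\}>\alpha$ holds because the Möbius map $(1+(1-2\alpha)z)/(1-z)$ sends $\mathbb{U}$ onto the half-plane $\Re w>\alpha$. Its coefficients are exactly $A_n$, so equality holds for every $n$.

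The step I expect to be the main obstacle is only the bookkeeping that the telescoping identity for $\sum A_k$ matches the product formula. The argument is otherwise standard once the Carathéodory bound is in hand.
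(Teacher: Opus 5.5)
Your proof is correct and follows essentially the paper's route. Theorem B itself is only quoted from Robertson, but the proof of Theorem \ref{Th-21}, read in one variable, is your scheme: Carath\'eodory's bound from Lemma \ref{Lem1}, the majorization $\zeta f'(\zeta)/f(\zeta)\ll (1+(1-2\alpha)\zeta)/(1-\zeta)$, the conclusion $f(\zeta)/\zeta\ll(1-\zeta)^{-2(1-\alpha)}$, and the extremal function $\zeta(1-\zeta)^{-2(1-\alpha)}$; your recurrence $(n-1)a_n=\sum_{k=1}^{n-1}b_{n-k}a_k$ with the identity $2(1-\alpha)\sum_{k=1}^{n-1}A_k=(n-1)A_n$ justifies the passage between the two majorizations, which the paper only asserts with ``consequently''.
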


\begin{theoC}\cite{Robertson-1936} Let $f(z)$ be given by $f(z) = z + \sum_{n=2}^{\infty} a_n z^n$. If $f\in\mathcal{K}_{\alpha}(\mathbb{U})$, then for $n\geq 2$, we have the following sharp coefficient estimates
\begin{align*}
|a_n|\leq \frac{1}{n!}\prod\limits_{k=2}^n(k-2\alpha).
\end{align*}
\end{theoC}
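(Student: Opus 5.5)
The plan is to prove Theorem C by first reducing it to Theorem B through Alexander's relation, and then to sketch the argument behind Theorem B itself. The reduction works as follows. If $f\in\mathcal{K}_{\alpha}(\mathbb{U})$, then $g(z)=zf'(z)$ belongs to $\mathcal{S}^*_{\alpha}(\mathbb{U})$, because
\[
\frac{zg'(z)}{g(z)}=1+\frac{zf''(z)}{f'(z)} .
\]
The coefficients of $g$ are $na_n$. Theorem B then gives $n|a_n|\leq \frac{1}{(n-1)!}\prod_{k=2}^n(k-2\alpha)$, which is exactly the bound in Theorem C. Sharpness transfers in the same way, via the function $f$ with $zf'(z)=z(1-z)^{-2(1-\alpha)}$. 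So the real work lies in Theorem B.

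For Theorem B, write
\[
\frac{zf'(z)}{f(z)}=\alpha+(1-\alpha)p(z),
\]
where $p(0)=1$ and $\Re p>0$. Carathéodory's lemma gives $p(z)=1+\sum c_k z^k$ with $|c_k|\leq 2$. Comparing coefficients in $zf'(z)=f(z)\bigl(\alpha+(1-\alpha)p(z)\bigr)$ yields the recursion
\[
(n-1)a_n=(1-\alpha)\sum_{k=1}^{n-1}c_k a_{n-k},\qquad a_1=1 .
\]

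The argument then proceeds by induction on $n$. Set $A_n=\frac{1}{(n-1)!}\prod_{k=2}^n(k-2\alpha)$. These are the coefficients of the extremal function
\[
k_\alpha(z)=\frac{z}{(1-z)^{2(1-\alpha)}},
\]
whose associated $p$ is $(1+z)/(1-z)$, so that all $c_k=2$. Consequently the $A_n$ satisfy the same recursion with $c_k$ replaced by $2$:
\[
(n-1)A_n=2(1-\alpha)\sum_{j=1}^{n-1}A_j .
\]
Assuming $|a_j|\leq A_j$ for all $j<n$, the triangle inequality gives
\[
(n-1)|a_n|\leq 2(1-\alpha)\sum_{j=1}^{n-1}A_j=(n-1)A_n .
\]
This step is precisely the majorization $f\ll k_\alpha$.

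The main point to verify is that the closed form $A_n$ actually satisfies the summed recursion. The cleanest route is to differentiate $\log k_\alpha$:
\[
\frac{zk_\alpha'(z)}{k_\alpha(z)}=1+2(1-\alpha)\frac{z}{1-z} .
\]
Expanding this and comparing coefficients gives the recursion directly. Alternatively, one can check the one-step ratio $A_{n+1}/A_n=(n+1-2\alpha)/n$ by subtracting consecutive instances of the recursion.

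Sharpness in Theorem B is then immediate, since $k_\alpha$ attains equality for every $n$.
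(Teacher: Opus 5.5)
Your proof is correct. The paper does not prove Theorem C itself; it only cites Robertson. Its proof of Theorem~\ref{Th-22}, which reduces to Theorem C when $X=\mathbb{C}$, runs essentially your argument: it majorizes $1+\zeta g''(\zeta)/g'(\zeta)$ by $\frac{1+(1-2\alpha)\zeta}{1-\zeta}$ and concludes $g'\ll(1-\zeta)^{-2(1-\alpha)}$. That is your Alexander reduction to the starlike case followed by majorization against $k_\alpha$. Your recursion $(n-1)a_n=(1-\alpha)\sum_{k=1}^{n-1}c_ka_{n-k}$, together with the induction, supplies the justification for the ``consequently'' step that the paper asserts without proof.
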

\vspace{1.2mm}

Before proceeding further, let us define some recurring notation and terminology.

Let $X$ be a complex Banach space with norm $\Vert{}\cdot\Vert{}$. For $r > 0$, the open ball centered at the origin with radius $r$ is defined as $\mathbb{B}_r = \{z \in X : \Vert{}z\Vert{} < r\}$. For $r = 1$, we simply write $\mathbb{B}$. When $X = \mathbb{C}$, the unit disk $\mathbb{B}$ is denoted by $\mathbb{U}$. The boundary and closure of $\mathbb{B}$ are denoted by $\partial \mathbb{B}$ and $\overline{\mathbb{B}}$, respectively.

Now let $X$ and $Y$ be two complex Banach spaces with respect to the norms $||.||_1$ and $||.||_2$ respectively. For simplicity, we denote both norms by $||.||$, when there is no possibility of confusion. Let $L(X, Y)$ denote the set of continuous linear operators from a complex Banach space $X$ into a complex Banach space $Y$ with the standard operator norm 
\begin{align*} 
	||A||=\sup\{||A(z)||: ||z||=1,\quad A\in L(X,Y)\}.
\end{align*} 

Let $I$ be the identity operator in $L(X, X)$. For each $z \in X \setminus \{0\}$, define the set  
\begin{align*}
	\displaystyle 
	T(z) = \{ l_z \in L(X, \mathbb{C}) : l_z(z) = \Vert{}z\Vert{}, \quad \Vert{}l_z\Vert{} = 1 \}.
\end{align*}

This set is guaranteed to be nonempty by the Hahn-Banach theorem. 
Let $\Omega$ be a domain in $X$ and let $f:\Omega \rightarrow Y$ be a mapping. Then $f$ is called holomorphic on $\Omega$ if for any $z\in\Omega$ there is a mapping $Df(z)\in L(X,Y)$, called the Fr\'echet derivative of $f$ at $z$ such that 
\begin{align*} \lim_{h\to 0} \frac{||f(z+h)-f(z)-Df(z)h||}{||h||}=0,\end{align*} i.e., 
\begin{align*} 
	f(z+h)=f(z)+Df(z)h+o(||h||).
\end{align*}

Let $\mathcal{H}(\Omega, \Omega')$ represent the set of all holomorphic mappings from a domain $\Omega \subset X$ into another domain $\Omega' \subset Y$. We simply write $\mathcal{H}(\Omega, X) = \mathcal{H}(\Omega)$. It is well known that if $f \in \mathcal{H}(\Omega,Y)$, then for all $w$ in some neighborhood of $z \in \Omega$, the function can be expressed as the power series expansion:
\begin{align*}f(w) = \sum_{n=0}^{\infty} \frac{1}{n!} D^n f(z)(w - z)^n,\end{align*} 
where $D^n f(z)$ denotes the $n$-th Fréchet derivative of $f$ at $z$, represented by a bounded symmetric $n$-linear mapping from $X^n$ into $X$. Here, $D^n f(z)(w - z)^n$ is standard notation for evaluating the $n$-linear form at $n$ identical copies of $(w - z)$:
\begin{align*}
	\displaystyle
	D^n f(z)(w - z)^n = D^n f(z)(\underbrace{w - z, w - z, \dots, w - z}_{n \text{ times}}),
\end{align*}
with the convention $D^0 f(z) = f(z)$.
A mapping $f \in \mathcal{H}(\mathbb{B})$ is said to be normalized if $f(0) = 0$ and $Df(0) = I$, where $I$ is the identity operator on $X$.
We say that $f$ is:\\
$\bullet$ biholomorphic if it is a bijection onto its image $f(\mathbb{B})$ with a holomorphic inverse $f^{-1}: f(\mathbb{B}) \to \mathbb{B}$;\\
$\bullet$ locally biholomorphic if its Fréchet derivative $Df(z)$ has a bounded inverse for every $z \in \mathbb{B}$.

The family of all normalized biholomorphic mappings on $\mathbb{B}$ is denoted by $\mathcal{S}(\mathbb{B})$.\vspace{1.4mm}

A biholomorphic mapping on $\mathbb{B}$ is said to be starlike if $f(\mathbb{B})$ is starlike with
respect to the origin. If $\mathbb{B}$ is the unit ball of $X$, a complex Banach space, we let $\mathcal{S}^*(\mathbb{B})$ denote the subset of $\mathcal{S}(\mathbb{B})$ consisting of normalized starlike mappings from $\mathbb{B}$ into $X$. We say that $f$ is
\emph{convex} on $\Omega$ if $f$ is biholomorphic on $\Omega$ and
$f(\Omega)$ is a convex domain in $Y$.\vspace{1.6mm}

In 1998, Sheng Gong \cite{Gong-1999} posed the following conjecture.
\begin{conjA}\cite{Gong-1999} If $f:\mathbb{U}^n\to \mathbb{C}^n$ is a normalized biholomorphic starlike mapping, then
\begin{align*}
\frac{\|D^mf(0)(z^m)\|}{m!}\leq m\|z\|^m,\quad z\in\mathbb{U}^n,\quad m=2,3,\ldots
\end{align*} 
\end{conjA}

We stress that Conjecture~A which we refer to as the Bieberbach–Sheng Conjecture, is not the literal $n$-dimensional analogue of the Bieberbach conjecture -- that literal analogue was already disproved by
Roper and Suffridge \cite{Roper-Suffridge-1999} even at $m=2$. Conjecture~A instead proposes a different, dimension-corrected bound and it is only in
this precise form that the case $m=2$ has been confirmed
(see \cite{Gong-1999}). For related work, we refer the reader to \cite{Gong-1999}-\cite{Graham-2016}.

In several complex variables, classical one-dimensional results often fail to hold in their original form. Most notably, Cartan showed that the Bieberbach conjecture does not hold on the unit ball. Consequently, extending univalent function theory to higher dimensions necessitates the introduction of suitable sub-classes or additional structural constraints.
Recently, coefficient bounds for various sub-classes of normalized biholomorphic mappings in higher dimensions have been established by several authors; see, for example,  Bracci et al. \cite{Bracci-2016}, Bracci and Roth \cite{Bracci-2018}, Graham et al. \cite{Graham-2002,Graham-2016}, Hamada and Honda \cite{Hamada-Honda-2008} and Hamada and Kohr \cite{Hamada-Kohr-2020}.\vspace{1.2mm}

Now, we recall the definition of starlikeness of order $\alpha$ in complex Banach space.
\begin{defiB}\emph{\cite[Definition 1.2]{Hamada-2001}}
Let $f:\mathbb{B}\to X$ be a normalized locally biholomorphic mapping and let $\alpha\in [0,1)$. We say that $f$ is starlike of order $\alpha$ if 
\begin{align*}
\displaystyle
\Re\left\lbrace ||z||/l_z([Df(z)]^{-1}f(z))\right\rbrace>\alpha,\quad z\in\mathbb{B}\backslash\{0\},\quad l_z\in T(z).
\end{align*}
\end{defiB}

Let $\mathcal{S}^*_{\alpha}(\mathbb{B})$ denote the class of starlike mappings of order $\alpha$ on $\mathbb{B}$. When $\alpha=0$, the class $\mathcal{S}^*_{\alpha}(\mathbb{B})$ reduces to the standard starlike class $\mathcal{S}^*(\mathbb{B})$.
There are now many references for starlike mappings in higher dimensions. For more information, see the recent monographs by Gong  \cite{Gong} and Kohr \& Liczberski  \cite{Kohr- Liczberski-1998}. Some of the foundational and original papers in this area include 
\cite{Chu}, \cite{SSQ_CHMS_1990}, \cite{Su2}-\cite{Thomas-Tuneski} and \cite{Xu-He-Xu}.

\begin{defiC}\cite[Definition 1.1]{Liu-Liu-2010}
A normalized locally biholomorphic mapping $f\in\mathcal{H}(\mathbb{B})$ is said to be quasi-convex of type $B$ and order $\alpha\;(0\leq \alpha<1)$ if
\begin{align*}
\displaystyle
\Re\left\lbrace l_z\left([Df(z)]^{-1}\left(D^2f(z)(z,z)+Df(z)z\right)\right)\right\rbrace>\alpha ||z||,\quad z\in\mathbb{B}\backslash\{0\}.
\end{align*}
\end{defiC}

If $\mathbb{B}$ is the unit ball of $X$, we let $\mathcal{K}^{\alpha}_B(\mathbb{B})$ denote the subset of $\mathcal{S}(\mathbb{B})$
consisting of normalized quasi-convex mappings of type $B$ and order $\alpha$ from $\mathbb{B}$ into $X$. When $\alpha=0$, the class $\mathcal{K}^{\alpha}_B(\mathbb{B})$ reduces to $\mathcal{K}_B(\mathbb{B})$, the class of quasi-convex mappings of type $B$. 
When $X=\mathbb{C}$, Definition C is the same as the definition of the normalized univalent convex function of order $\alpha$, which is denoted by $\mathcal{K}_{\alpha}(\mathbb{U})$.
Convex mappings in higher dimensions have been extensively studied in recent literature. For foundational work on this topic, see \cite{Ham4}, \cite{Roper-Suffridge-1999}, \cite{Su2}, \cite{Su3}, \cite{Xu-Dl-Li} and \cite{Zhang-Liu-2002}.
\vspace{1.2mm}

In this paper, by considering biholomorphic mappings
$F:\mathbb B\to X$ of the form $F(z)=f(z)z,\; z\in\mathbb B,$ where $f:\mathbb B\to\mathbb C$ is holomorphic and $f(0)=1$, we obtain the following Banach-space analogue of Robertson's starlike coefficient estimate and the corresponding result for quasi-convex mappings of type $B$ as follows:

 \begin{theo}\label{Th-21}Let $f\in\mathcal{H}(\mathbb{B},\mathbb{C})$ with $f(0)=1$ and $F(z)=zf(z)\in\mathcal{S}^*_{\alpha}(\mathbb{B})$. Then 
 \begin{align}\label{Th-2.1}
 	\left|\frac{l_{z}(D^{k}F(0)(z^{k}))}{k!||z||^{k}}\right|\leq \frac{1}{(k-1)!}\prod\limits_{m=2}^{k}(m-2\alpha),\quad z\in\mathbb{B}\backslash\{0\}
 \end{align}
 for $k=2,3,\ldots$. These estimates are sharp.	
 \end{theo}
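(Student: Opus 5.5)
Fix $z\in\mathbb{B}\setminus\{0\}$ and $l_z\in T(z)$, and reduce everything to a one-variable problem on the slice through $z$. Set $z_0=z/\|z\|$ and, for $\zeta\in\mathbb{U}$, define $g(\zeta)=\zeta f(\zeta z_0)$, so that $F(\zeta z_0)=g(\zeta) z_0$.

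\textbf{Step 1: the slice function $g$.} We have $g(0)=0$ and $g'(0)=f(0)=1$. Expanding $F(\zeta z_0)=\sum_k \frac{1}{k!}D^kF(0)(z_0^k)\zeta^k$ and comparing with $g(\zeta)z_0$ shows that $D^kF(0)(z_0^k)$ is a scalar multiple of $z_0$:
\[
\frac{D^kF(0)(z_0^k)}{k!}=a_k z_0 ,
\]
where $g(\zeta)=\zeta+\sum_{k\ge2}a_k\zeta^k$. By homogeneity,
\[
\frac{l_z(D^kF(0)(z^k))}{k!\,\|z\|^k}=a_k\, l_z(z_0)=a_k .
\]
So the claimed inequality becomes exactly Robertson's bound (Theorem B), provided $g\in\mathcal{S}^*_\alpha(\mathbb{U})$.

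\textbf{Step 2 (main step): $g$ is starlike of order $\alpha$.} Compute $DF(w)=f(w)I+w\,Df(w)(\cdot)$. For a rank-one perturbation of the identity, the inverse acting on $w$ is
\[
[DF(w)]^{-1}F(w)=\frac{f(w)}{f(w)+Df(w)w}\,w ,
\]
which is valid because local biholomorphy forces $f(w)+Df(w)w\neq0$. Hence
\[
\frac{\|w\|}{l_w([DF(w)]^{-1}F(w))}=1+\frac{Df(w)w}{f(w)} .
\]
Take $w=\zeta z_0$ with $0<|\zeta|<1$. Then $Df(w)w=\zeta\frac{d}{d\zeta}f(\zeta z_0)$, and so
\[
1+\frac{Df(w)w}{f(w)}=\frac{\zeta g'(\zeta)}{g(\zeta)} .
\]
The hypothesis (Definition B) therefore gives $\Re\{\zeta g'(\zeta)/g(\zeta)\}>\alpha$ for $0<|\zeta|<1$. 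By continuity this also holds at $\zeta=0$, where the value is $1$.

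To conclude $g\in\mathcal{S}^*_\alpha(\mathbb{U})$ we also need $g$ univalent. This follows because the condition $\Re\{\zeta g'/g\}>0$ classically implies univalence. Alternatively, we can note that $f$ has no zeros: if $f(w)=0$ then $F(w)=0=F(0)$, contradicting the injectivity of $F$. Then Theorem B applies and gives
\[
|a_k|\le \frac{1}{(k-1)!}\prod_{m=2}^k(m-2\alpha).
\]
This step is the only real obstacle. It requires checking carefully the inverse of $f(w)I+w\,Df(w)$, for instance by verifying that $DF(w)\bigl(\frac{f}{f+Df(w)w}\,w\bigr)=f(w)w$, and checking that the quotient is independent of the choice of $l_w$.

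\textbf{Step 3: sharpness.} Let $k_\alpha(\zeta)=\zeta(1-\zeta)^{-2(1-\alpha)}$ be Robertson's extremal function. Choose a unit vector $u$ and $l_u\in T(u)$, and set
\[
F(z)=z\,(1-l_u(z))^{-2(1-\alpha)} .
\]
Step 2 in reverse shows $F\in\mathcal{S}^*_\alpha(\mathbb{B})$. Here the expression to control is $1+\frac{Df(z)z}{f(z)}=\frac{1+(1-2\alpha)l_u(z)}{1-l_u(z)}$, and we need its real part to exceed $\alpha$. Since $|l_u(z)|<1$ and the map $t\mapsto\frac{1+(1-2\alpha)t}{1-t}$ sends the unit disk onto $\{\Re>\alpha\}$, this holds. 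Biholomorphy follows from starlikeness, or directly from the slice structure.

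Now take $z=ru$ with $0<r<1$. Then $l_z=l_u$, and the ratio equals the $k$-th coefficient of $k_\alpha$, namely
\[
\frac{1}{(k-1)!}\prod_{m=2}^k(m-2\alpha),
\]
so equality holds in the estimate of Theorem \ref{Th-21}.
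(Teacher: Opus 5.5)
Your proposal is correct and follows the paper's route. You restrict to the slice $\zeta\mapsto\zeta z_0$, use the rank-one structure of $DF(w)=f(w)I+w\,Df(w)(\cdot)$ to get $\|w\|/l_w([DF(w)]^{-1}F(w))=1+Df(w)w/f(w)$, reduce to a one-variable function of order $\alpha$, and test on $z(1-l_u(z))^{-2(1-\alpha)}$. The only differences are minor: you apply Robertson's Theorem B directly to $\zeta f(\zeta z_0)$, whereas the paper re-derives that bound via Carath\'eodory's lemma and the majorization step $1+\zeta h'/h\ll\frac{1+(1-2\alpha)\zeta}{1-\zeta}\Rightarrow h\ll(1-\zeta)^{-2(1-\alpha)}$; and you verify that the extremal map lies in $\mathcal{S}^*_\alpha(\mathbb{B})$ directly instead of citing a lemma from the literature.
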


 \begin{theo}\label{Th-22}Let $f\in\mathcal{H}(\mathbb{B},\mathbb{C})$ with $f(0)=1$ and $F(z)=zf(z)\in\mathcal{K}^{\alpha}_{B}(\mathbb{B})$. Then 
 \begin{align}\label{Th-2.2}
 \left|\frac{l_{z}(D^{k}F(0)(z^{k}))}{k!||z||^{k}}\right|\leq \frac{1}{k!}\prod\limits_{m=2}^{k}(m-2\alpha),\quad z\in\mathbb{B}\backslash\{0\}
 \end{align}
 for $k=2,3,\ldots$. These estimates are sharp.	
 \end{theo}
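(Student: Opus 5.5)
The plan is to reduce everything to a one-variable statement on complex lines through the origin and then apply Theorem~C. Fix $z\in\mathbb{B}\setminus\{0\}$, put $z_0=z/\|z\|$, and define
\begin{align*}
g(\zeta)=\zeta f(\zeta z_0),\quad \zeta\in\mathbb{U},
\end{align*}
so that $F(\zeta z_0)=g(\zeta)z_0$. Then $g(0)=0$ and $g'(0)=f(0)=1$. Differentiating $F(\zeta z_0)=g(\zeta)z_0$ in $\zeta$ gives
\begin{align*}
DF(\zeta z_0)z_0=g'(\zeta)z_0,\qquad D^2F(\zeta z_0)(z_0,z_0)=g''(\zeta)z_0 .
\end{align*}
Since $DF(\zeta z_0)$ is invertible, $g'(\zeta)\neq 0$ and $[DF(\zeta z_0)]^{-1}z_0=z_0/g'(\zeta)$. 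With $w=\zeta z_0$ (for $\zeta\ne0$) we have $DF(w)w=\zeta g'(\zeta)z_0$ and $D^2F(w)(w,w)=\zeta^2g''(\zeta)z_0$. Hence
\begin{align*}
[DF(w)]^{-1}\bigl(D^2F(w)(w,w)+DF(w)w\bigr)=\Bigl(\zeta^2\frac{g''(\zeta)}{g'(\zeta)}+\zeta\Bigr)z_0 .
\end{align*}

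Next I apply a support functional. If $l_w\in T(w)$, then $l_w(z_0)=|\zeta|/\zeta$, so the condition of Definition~C becomes
\begin{align*}
|\zeta|\,\Re\Bigl\{1+\zeta\frac{g''(\zeta)}{g'(\zeta)}\Bigr\}>\alpha|\zeta| .
\end{align*}
Thus $\Re\{1+\zeta g''/g'\}>\alpha$ on $\mathbb{U}\setminus\{0\}$, and at $\zeta=0$ the value is $1$. Therefore $g\in\mathcal{K}_\alpha(\mathbb{U})$. The advantage of this route is that every vector involved is parallel to $z_0$, so the awkward general formula for $[DF]^{-1}$ is never needed.

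For the coefficients, write $g(\zeta)=\zeta+\sum_{k\ge2}b_k\zeta^k$. Comparing with the expansion $F(\zeta z_0)=\sum_k \frac{\zeta^k}{k!}D^kF(0)(z_0^k)$ gives $D^kF(0)(z_0^k)/k!=b_kz_0$. By homogeneity,
\begin{align*}
\frac{l_z(D^kF(0)(z^k))}{k!\|z\|^k}=b_k\,l_z(z_0)=b_k ,
\end{align*}
using $l_z(z_0)=1$. Theorem~C then yields \eqref{Th-2.2}.

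For sharpness, let $k_\alpha$ be Robertson's extremal function in $\mathcal{K}_\alpha(\mathbb{U})$ (with $k_\alpha'(\zeta)=(1-\zeta)^{-2(1-\alpha)}$). Fix $u\in\partial\mathbb{B}$ and $l_u\in T(u)$, and set
\begin{align*}
F(z)=\frac{k_\alpha(l_u(z))}{l_u(z)}\,z ,
\end{align*}
where $f(z)=k_\alpha(l_u(z))/l_u(z)$ is holomorphic with $f(0)=1$. Along the line through $z_0$, the associated function is $g(\zeta)=k_\alpha(c\zeta)/c$ with $c=l_u(z_0)$ and $|c|\le1$ (or $g(\zeta)=\zeta$ if $c=0$). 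This is a dilation of $k_\alpha$ and still satisfies $\Re\{1+\zeta g''/g'\}>\alpha$. Running the computation above in reverse shows $F\in\mathcal{K}_B^\alpha(\mathbb{B})$; that $DF$ is invertible follows from $g'\ne0$ together with the formula for $DF$ on vectors transverse to $z$. At $z=ru$ we have $c=1$, and the coefficients of $k_\alpha$ give equality in \eqref{Th-2.2}.

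The main technical point I expect is justifying that $F$ is locally biholomorphic and biholomorphic for the extremal mapping, which is needed for membership in $\mathcal{S}(\mathbb{B})$. I would handle this by invoking the known result that starlikeness or quasi-convexity conditions of this type imply univalence.
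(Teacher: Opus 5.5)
Your proposal is correct and follows the same route as the paper. Both arguments restrict to the line through $z_0$, set $g(\zeta)=\zeta f(\zeta z_0)$, show $\Re\{1+\zeta g''(\zeta)/g'(\zeta)\}>\alpha$, identify $l_z(D^kF(0)(z^k))/(k!\|z\|^k)$ with the $k$-th coefficient of $g$, and use the same extremal mapping $\frac{k_\alpha(l_u(z))}{l_u(z)}z$. The differences are only in packaging: you quote Theorem~C where the paper re-derives Robertson's bound by majorization ($g'\ll(1-\zeta)^{-2(1-\alpha)}$), you invert $DF$ only on $z_0$ instead of using the general inverse formula, and you check the extremal directly where the paper cites Lemma~2.1 of Liu--Liu. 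The univalence you defer to a ``known result'' is in fact immediate, since $l_u(F(z))=k_\alpha(l_u(z))$ determines $l_u(z)$, hence $f(z)\neq 0$, hence $z$.
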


\begin{figure}[htbp]
	\centering
	\begin{tikzpicture}
		\begin{axis}[
			width=9.0cm,
			height=6.2cm,
			xmin=1.7, xmax=8.3,
			ymin=0.25, ymax=3.45,
			xlabel={$k$},
			ylabel={Sharp coefficient estimate},
			xtick={2,3,4,5,6,7,8},
			ytick={0.5,1,1.5,2,2.5,3},
			grid=both,
			grid style={dashed, opacity=0.35},
			major grid style={opacity=0.45},
			minor grid style={opacity=0.18},
			minor tick num=1,
			axis line style={thick},
			tick style={thick},
			xlabel style={font=\small},
			ylabel style={font=\small},
			tick label style={font=\small},
			legend style={
				at={(1.04,0.50)},
				anchor=west,
				draw=black!55,
				rounded corners=2pt,
				fill=white,
				fill opacity=0.96,
				text opacity=1,
				font=\footnotesize,
				inner sep=5pt,
				row sep=3pt,
				cells={anchor=west}
			},
			legend cell align={left},
			clip=false
			]
			
			\addplot[
			very thick,
			blue,
			mark=*,
			mark size=2.4pt
			]
			coordinates {
				(2,1.5)
				(3,1.875)
				(4,2.1875)
				(5,2.46094)
				(6,2.70703)
				(7,2.93262)
				(8,3.14209)
			};
			\addlegendentry{
				$B_k^{*}(\alpha)$: starlike
				(Theorem~\ref{Th-21})
			}
			
			\addplot[
			very thick,
			red,
			mark=square*,
			mark size=2.3pt
			]
			coordinates {
				(2,0.75)
				(3,0.625)
				(4,0.546875)
				(5,0.492188)
				(6,0.451172)
				(7,0.418945)
				(8,0.392761)
			};
			\addlegendentry{
				$B_k^{qc}(\alpha)$: quasi-convex
				(Theorem~\ref{Th-22})
			}
			
			\node[
			draw=black!45,
			rounded corners=2pt,
			fill=white,
			fill opacity=0.94,
			text opacity=1,
			font=\scriptsize,
			align=center,
			inner sep=4pt
			] at (axis cs:5.0,3.15)
			{$B_k^{*}(\alpha)=kB_k^{qc}(\alpha)$};
			
		\end{axis}
	\end{tikzpicture}
	
	\caption{Sharp coefficient bounds $B_k^{*}(\alpha)=\frac{1}{(k-1)!}\prod_{m=2}^{k}(m-2\alpha)$
		and $B_k^{qc}(\alpha)=B_k^{*}(\alpha)/k$ at $\alpha=1/4$, for $k=2,\dots,8$.
		The starlike bound increases with $k$ while the quasi-convex bound decreases,
		making the exact factor-of-$k$ relationship $B_k^{qc}=B_k^{*}/k$ visually explicit.}
	\label{fig:coefficient-comparison}
\end{figure}

\begin{figure}[htbp]
	\centering
	\begin{tikzpicture}
		\begin{axis}[
			width=9.0cm,
			height=6.4cm,
			xmin=0, xmax=1,
			ymin=0, ymax=6.2,
			xlabel={Order $\alpha$},
			ylabel={Sharp coefficient estimate $B_k^{*}(\alpha)$},
			xtick={0,0.2,0.4,0.6,0.8,1.0},
			ytick={0,1,2,3,4,5,6},
			minor x tick num=1,
			minor y tick num=1,
			grid=both,
			major grid style={
				dashed,
				line width=0.45pt,
				opacity=0.50
			},
			minor grid style={
				dotted,
				line width=0.30pt,
				opacity=0.38
			},
			legend style={
				at={(1.04,0.50)},
				anchor=west,
				draw=black!55,
				rounded corners=2pt,
				fill=white,
				font=\footnotesize
			},
			legend cell align={left},
			clip=false
			]
			
			\addplot[
			very thick,
			blue,
			domain=0:1,
			samples=100
			]{2-2*x};
			\addlegendentry{$k=2$}
			
			\addplot[
			very thick,
			red,
			domain=0:1,
			samples=100
			]{(2-2*x)*(3-2*x)/2};
			\addlegendentry{$k=3$}
			
			\addplot[
			very thick,
			orange,
			domain=0:1,
			samples=100
			]{(2-2*x)*(3-2*x)*(4-2*x)/6};
			\addlegendentry{$k=4$}
			
			\addplot[
			very thick,
			teal,
			domain=0:1,
			samples=100
			]{(2-2*x)*(3-2*x)*(4-2*x)*(5-2*x)/24};
			\addlegendentry{$k=5$}
			
			\addplot[
			very thick,
			violet,
			domain=0:1,
			samples=100
			]{(2-2*x)*(3-2*x)*(4-2*x)*(5-2*x)*(6-2*x)/120};
			\addlegendentry{$k=6$}
			
			\addplot[
			only marks,
			mark=*,
			mark size=2pt,
			black
			] coordinates {(0.5,1)};
			
			\node[
			draw=black!45,
			rounded corners=2pt,
			fill=white,
			font=\scriptsize,
			align=center,
			inner sep=3pt
			] (crossing)
			at (axis cs:0.55,2.45)
			{all curves meet\\at $(\tfrac12,1)$};
			
			\draw[
			-{Stealth[length=2.2mm]},
			black!65,
			line width=0.7pt
			]
			(crossing.south west)
			.. controls (axis cs:0.62,1.95)
			and (axis cs:0.55,1.45) ..
			(axis cs:0.5,1);
			
			\node[
			draw=black!45,
			rounded corners=2pt,
			fill=white,
			font=\scriptsize,
			align=center,
			inner sep=4pt
			] at (axis cs:0.86,1.25)
			{$B_k^{*}(\alpha)\to0$\\as $\alpha\to1^{-}$};
			
		\end{axis}
	\end{tikzpicture}
	
	\caption{Dependence of the sharp coefficient estimate
		$B_k^{*}(\alpha)$ on the order $\alpha$ for
		$k=2,3,4,5,6$. For each fixed $k$, $B_k^{*}(\alpha)$
		decreases with $\alpha$ and tends to zero as
		$\alpha\to1^{-}$; all curves pass through the common
		point $(1/2,1)$.}
	
	\label{fig:alpha-dependence}
\end{figure}

\section{\bf{Proof of Theorem \ref{Th-21}}}
Let $\mathcal{P}$ denote the class of analytic functions $p$ in $\mathbb{U}$ satisfying $p(0) = 1$ and $\Re\{p(z)\}> 0$ for all $z \in \mathbb{U}$. The lemma below is instrumental in establishing our main results.
\begin{lem}\label{Lem1} {\em{\cite{Cho-Kumar-Ravichandran-2019}}}
	Let $p(z)=1+\sum_{n=1}^{\infty}p_nz^n\in\mathcal{P}$. Then $|p_n|\leq 2$ and $|p_n-p_mp_{n-m}|\leq 2$, $n\geq 2$, $1\leq m\leq n-1$.
\end{lem}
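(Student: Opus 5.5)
We prove both inequalities at once through the Herglotz representation of $\mathcal{P}$, which turns coefficient questions into statements about trigonometric moments of a probability measure.

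\textbf{Setup.} By the Herglotz theorem, every $p\in\mathcal{P}$ can be written as
\begin{align*}
p(z)=\int_0^{2\pi}\frac{1+e^{it}z}{1-e^{it}z}\,d\mu(t),
\end{align*}
where $\mu$ is a probability measure on $[0,2\pi)$. Expanding the kernel as a geometric series gives
\begin{align*}
p_n=2c_n,\qquad c_k:=\int_0^{2\pi}e^{ikt}\,d\mu(t),\qquad k\geq 1.
\end{align*}
Since $\mu$ is a probability measure, $|c_k|\leq 1$ for every $k$, and this immediately gives $|p_n|\leq 2$.

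\textbf{Reduction of the second inequality.} In terms of the moments,
\begin{align*}
p_n-p_mp_{n-m}=2\left(c_n-2c_mc_{n-m}\right).
\end{align*}
So it suffices to prove $|c_n-2c_mc_{n-m}|\leq 1$. Put $a=c_m$ and $b=c_{n-m}$. Expanding the product and using $\int d\mu=1$, we get
\begin{align*}
\int_0^{2\pi}\left(e^{imt}-a\right)\left(e^{i(n-m)t}-b\right)d\mu(t)=c_n-ab-ab+ab=c_n-ab.
\end{align*}
Hence
\begin{align*}
c_n-2ab=\int_0^{2\pi}\left(e^{imt}-a\right)\left(e^{i(n-m)t}-b\right)d\mu(t)-ab.
\end{align*}

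\textbf{Bounding the integral.} Apply the Cauchy--Schwarz inequality in $L^2(\mu)$. The variance computation
\begin{align*}
\int_0^{2\pi}|e^{imt}-a|^2\,d\mu=1-2\Re(\bar a a)+|a|^2=1-|a|^2
\end{align*}
holds, and the same computation gives $\int|e^{i(n-m)t}-b|^2\,d\mu=1-|b|^2$. Therefore
\begin{align*}
|c_n-2ab|\leq\sqrt{(1-|a|^2)(1-|b|^2)}+|a||b|.
\end{align*}

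\textbf{Conclusion.} The right-hand side is the Euclidean inner product of the unit vectors $\left(\sqrt{1-|a|^2},|a|\right)$ and $\left(\sqrt{1-|b|^2},|b|\right)$. By Cauchy--Schwarz in $\mathbb{R}^2$ it is at most $1$, which yields $|p_n-p_mp_{n-m}|\leq 2$.

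The key step is the subtraction identity for $c_n-ab$: recentring each exponential by its mean is exactly what makes the variances equal $1-|a|^2$ and $1-|b|^2$. Everything else is routine.
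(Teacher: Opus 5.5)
Your proof is correct and complete. The centering identity $\int(e^{imt}-a)(e^{i(n-m)t}-b)\,d\mu=c_n-ab$ holds, the variance computation gives $1-|a|^2$, and the closing bound $\sqrt{(1-|a|^2)(1-|b|^2)}+|a||b|\le 1$ is valid. The case $n=2m$, where $a=b$, causes no trouble, and your Herglotz kernel is the standard one after the reflection $t\mapsto -t$.

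The paper does not prove this lemma; it only cites the Cho--Kumar--Ravichandran survey. The first bound is Carath\'eodory's classical lemma, and the second is Livingston's inequality. So the comparison is between a citation and your self-contained argument.

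Your route gives more than the statement asks. It yields the refinement
\[
|p_n-p_mp_{n-m}|\le \tfrac12\sqrt{(4-|p_m|^2)(4-|p_{n-m}|^2)}+\tfrac12|p_m||p_{n-m}|.
\]
Writing $c_n-2\mu ab=\int(e^{imt}-a)(e^{i(n-m)t}-b)\,d\mu+(1-2\mu)ab$ instead, the same two Cauchy--Schwarz steps give Efraimidis's generalization $|p_n-\mu p_mp_{n-m}|\le 2\max\{1,|2\mu-1|\}$ for complex $\mu$ with no extra work.

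Note also that the paper only uses the first half, $|p_n|\le 2$. It enters through $|g_k|\le 2(1-\alpha)$ in the proof of Theorem \ref{Th-21} and $|H_k|\le 2(1-\alpha)$ in the proof of Theorem \ref{Th-22}. The Livingston-type inequality plays no role in the paper's results, so your first paragraph alone already covers everything the paper needs.
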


\begin{proof}[Proof of Theorem \ref{Th-21}]
We have $f\in\mathcal{H}(\mathbb{B},\mathbb{C})$ with $f(0)=1$ and $F(z)=zf(z)\in\mathcal{S}^*_{\alpha}(\mathbb{B})$. Let $z\in \mathbb{B}\backslash\{0\}$ be fixed. Suppose $z_0=z/\|z\|$. Define $g:\mathbb{U}\to \mathbb{C}$ as follows
\begin{align}\label{Th-21.0}	
\displaystyle
g(\zeta)=\begin{cases}
\frac{\zeta}{l_{z_0}\left([D F(\zeta z_0)]^{-1}F(\zeta z_0)\right)},& \zeta\neq 0,\\[8pt]
1& \zeta=0.
\end{cases}
\end{align}

Clearly $g\in\mathcal{H}(\mathbb{U})$ and $g(0)=1$.
Since $l_{\zeta z}(.)=\frac{|\zeta|}{\zeta}l_z$ for each $\zeta\in\mathbb{C}\backslash\{0\}$, we have
\begin{align*}
g(\zeta)=\frac{\zeta}{l_{z_0}\left([D F(\zeta z_0)]^{-1}F(\zeta z_0)\right)}=\frac{\|\zeta z_0\|}{l_{\zeta z_0}\left([D F(\zeta z_0)]^{-1}F(\zeta z_0)\right)}
\end{align*}	
for $\zeta\in\mathbb{U}\backslash\{0\}$  and so  $\Re\{g(\zeta)\}>\alpha$
for all $\zeta\in\mathbb{U}$. Suppose
\begin{align}\label{Th-21.a}
g(\zeta)=1+g_1\zeta+g_2\zeta^2+g_3\zeta^3\ldots
\end{align}

Let $p(\zeta)=(g(\zeta)-\alpha)/(1-\alpha)$
for all $\zeta\in\mathbb{U}$. Clearly $p\in\mathcal{H}(\mathbb{U})$ with $\Re\{p(\zeta)\}>0$ for all $\zeta\in\mathbb{U}$ and $p(0)=1$. 
Suppose $h(\zeta)=1+p_1\zeta+p_2\zeta^2+p_3\zeta^3+\ldots$.
According to Lemma \ref{Lem1}, we have $|p_k|\leq 2$ for $k=1,2,\ldots$.
Clearly 
\begin{align*}
g(\zeta)=1+\sum\limits_{i=1}^{\infty}g_i\zeta^i=1+(1-\alpha)p_1\zeta+(1-\alpha)p_2\zeta^2+(1-\alpha)p_3\zeta^3+\ldots
\end{align*}

Consequently, for $k=1,2,\ldots$, we have
\begin{align}\label{Th-21.b}
|g_k|\leq (1-\alpha)|p_k|\leq 2(1-\alpha).
\end{align}

Using a similar method as in Theorem 7.1.14 \cite{Graham-Kohr}, we have
\begin{align}\label{Th-21.2}
	[D F(z)]^{-1}\eta=\frac{1}{f(z)}\left\lbrack \eta-\frac{\frac{(Df(z)\eta)z}{f(z)}}{1+\frac{Df(z)z}{f(z)}}\right\rbrack.
\end{align}

Therefore 
\begin{align}\label{Th-21.2}
	[D F(z)]^{-1}F(z)=\frac{zf(z)}{f(z)+D f(z)z},
\end{align}
which implies that $\frac{\|z\|}{l_z\left([D F(z)]^{-1}F(z)\right)}=1+\frac{Df(z) z}{f(z)}$
and so
\begin{align}\label{Th-21.3}
	g(\zeta)=\frac{\|\zeta z_0\|}{l_{\zeta z_0}\left([D F(\zeta z_0)]^{-1}F(\zeta z_0)\right)}=1+\frac{Df(\zeta z_0) \zeta z_0}{f(\zeta z_0)}.
\end{align}

Let 
\begin{align}\label{Th-21.4}
f(z)=1+\sum\limits_{k=1}^{\infty} \frac{1}{k!}D^kf(0)(z^k).
\end{align}	

Clearly
\begin{align}\label{Th-21.5}
Df(z)(z)=\sum\limits_{k=1}^{\infty} \frac{k}{k!}D^kf(0)(z^k).
\end{align}	

If we take $ h(\zeta)=f(\zeta z_0)$, then from \eqref{Th-21.4}, we have
\begin{align}\label{Th-21.6}
\displaystyle
 h(\zeta)=f(\zeta z_0)=1+a_1\zeta+a_2\zeta^2+\ldots.
\end{align}
where
\begin{align}\label{Th-21.7}
a_k=\frac{1}{k!}D^k f(0)(z_0^k)
\end{align}
for $k=1,2,\ldots$
We see that \eqref{Th-21.5} yields
\begin{align}\label{Th-21.8}
\displaystyle
D f(\zeta z_0)(\zeta z_0)=\sum\limits_{k=1}^{\infty}ka_k\zeta^k
\end{align}

Obviously \eqref{Th-21.6} and \eqref{Th-21.8}, yields $ h'(\zeta)\zeta=D f(\zeta z_0)(\zeta z_0)$. So from \eqref{Th-21.3}, we get
\begin{align}\label{Th-21.9}
1+\frac{\zeta h'(\zeta)}{h(\zeta)}=g(\zeta)=\sum\limits_{k=0}^{\infty}g_k\zeta^k,
\end{align}
where $g_0=1$.
Note that 
\begin{align*}
\frac{1+(1-2\alpha)\zeta}{1-\zeta}=1+2(1-\alpha)\zeta(1-\zeta)^{-1}=1+2(1-\alpha)\sum\limits_{k=1}^{\infty}\zeta^k=\sum\limits_{k=0}^{\infty}b_k\zeta^k,
\end{align*}
where $b_0=1$, $b_k=2(1-\alpha)$, $k\in\mathbb{N}$. Now \eqref{Th-21.b} yields $|g_k|\leq b_k$, $k\in\mathbb{N}\cup\{0\}$. Hence
\begin{align*}
g(\zeta)\ll\frac{1+(1-2\alpha)\zeta}{1-\zeta}
\end{align*}
and so \eqref{Th-21.9} yields
\begin{align*}
1+\frac{\zeta h'(\zeta)}{h(\zeta)}\ll \frac{1+(1-2\alpha)\zeta}{1-\zeta}.
\end{align*}

Consequently 
\begin{align}\label{Th-21.10}
h(\zeta)\ll \phi(\zeta):=\frac{1}{(1-\zeta)^{2(1-\alpha)}}=1+\sum\limits_{k=1}^{\infty}c_k\zeta^k,
\end{align}
where
\begin{align}\label{Th-21.11}
c_k=\frac{(2-2\alpha)(3-2\alpha)\ldots((k+1)-2\alpha)}{k!}.
\end{align}
Using \eqref{Th-21.6} to \eqref{Th-21.10}, we get $|a_k|\leq c_k$, $k\in\mathbb{N}$. Consequently \eqref{Th-21.7} and \eqref{Th-21.11} give
\begin{align}\label{Th-21.12}
\left|\frac{1}{k!}D^k f(0)(z_0^k)\right|\leq \frac{(2-2\alpha)(3-2\alpha)\ldots((k+1)-2\alpha)}{k!}=\frac{1}{k!}\prod\limits_{m=2}^{k+1}(m-2\alpha)
\end{align}
for $k=1,2,3,\ldots$.
Since $F(z)=zf(z)$, we have (see \cite[Theorem 2]{Xiong-Yu-Wang-2025})
\begin{align}\label{Th-21.13}
D^mF(0)(z^m)=mD^{m-1}f(0)(z^{m-1})z
\end{align}
where $m\in\mathbb{N}$. As $z_0=z/||z||$, we deduce from \eqref{Th-21.13} that
\begin{align}\label{Th-21.14}
\displaystyle
\frac{l_{z_0}(D^mF(0)(z_0^m))}{m!}=\frac{D^{m-1}f(0)(z_0^{m-1})}{(m-1)!}.
\end{align}

Finally from \eqref{Th-21.12} and \eqref{Th-21.14}, we have
\begin{align*}
\left|\frac{l_{z_0}(D^{k+1}F(0)(z_0^{k+1}))}{(k+1)!}\right|\leq \frac{1}{k!}\prod\limits_{m=2}^{k+1}(m-2\alpha),
\end{align*}
i.e., for $k=2,3,\ldots$, we get the desired inequalities \eqref{Th-2.1}.

To establish the sharpness of the bound, let us consider the function $F$ defined by
\begin{align*}
F(z)=zf(z)=\frac{z}{\left(1-l_u(z)\right)^{2(1-\alpha)}},\quad z\in\mathbb{B},\quad \|u\|=1.
\end{align*}

According to Lemma 2.2 \cite{Xu-He-Xu}, obviously $F\in\mathcal{S}^*(\mathbb{B})$. Note that 
\begin{align*}
f(z)=1+\sum_{k=1}^{\infty}\frac{(2-2\alpha)(3-2\alpha)\ldots((k+1)-2\alpha)}{k!}(l_u(z))^{k},\quad z\in\mathbb{B}.
\end{align*}

Clearly \eqref{Th-21.13} yields
\begin{align*}
\frac{l_z(D^{k+1}F(0)(z^{k+1}))}{(k+1)!}=(l_u(z))^k\|z\|\;\frac{1}{k!}\prod\limits_{m=2}^{k+1}(m-2\alpha).
\end{align*}

Taking $z=ru\;(0<r<1)$, we get for $k=2,3,\ldots$
\begin{align*}
\left|\frac{l_z(D^{k}F(0)(z^{k}))}{k!||z||^{k}}\right|=\frac{1}{(k-1)!}\prod\limits_{m=2}^{k}(m-2\alpha),\quad z\in\mathbb{B}\backslash\{0\}.
\end{align*}

\end{proof}

\section{\bf{Proof of Theorem \ref{Th-22}}}
\begin{proof}
By the given condition, we have $F\in\mathcal{K}^{\alpha}_{B}(\mathbb{B})$ and so 
\begin{align}\label{Lem1.1}
\displaystyle
\Re\left\lbrace \frac{l_{z_0}\left([DF(z)]^{-1}\phi(z)\right)}{||z||}\right\rbrace>\alpha,
\end{align}
$z\in\mathbb{B}\backslash\{0\}$ and $l_z\in T(z)$, where $\phi(z)=D^2 F(z)(z^2)+DF(z) z$.	
Let $z\in \mathbb{B}\backslash\{0\}$ be fixed and let $z_0=z/\|z\|$.
We define the function $H:\mathbb{U}\to \mathbb{C}$ as follows
\begin{align}\label{Th2.2}	
\displaystyle
H(\zeta)=\begin{cases}
\frac{l_{z_0}\left([DF(\zeta z_0)]^{-1}\phi(\zeta z_0)\right)}{\zeta},& \zeta\neq 0,\\[8pt]
1& \zeta=0.
\end{cases}
\end{align}

Clearly $H\in\mathcal{H}(\mathbb{U})$ and $H(0)=1$.
Since $l_{\zeta z}(.)=\frac{|\zeta|}{\zeta}l_z$ for each $\zeta\in\mathbb{U}\backslash\{0\}$, we have
\begin{align}\label{Th2.3}
\displaystyle
H(\zeta)=\frac{l_{\zeta z_0}\left([DF(\zeta z_0)]^{-1}\phi(\zeta z_0)\right)}{||\zeta z_0||}
\end{align}	
and so $\Re\{H(\zeta)\}>\alpha$ for all $\zeta\in\mathbb{U}$.
Suppose
\begin{align}\label{Th2.3a}
H(\zeta)=1+H_1\zeta+H_2\zeta^2+H_3\zeta^3+\ldots
\end{align}

Clearly 
\begin{align}\label{Th2.4}
|H_k|\leq 2(1-\alpha)
\end{align}
for $k=1,2,\ldots$.
Using a similar method as in Theorem 7.1.14 \cite{Graham-Kohr}, we have
\begin{align}\label{Th2.5}
\displaystyle
[D F(z)]^{-1}\phi(z)=\frac{\phi(z)}{f(z)+D f(z)z}.
\end{align}

As $F(z)$ is normalized, it has the power series expansion
\begin{align}\label{Th2.9a}
\displaystyle
F(z)=z+\sum\limits_{k=2}^{\infty}\frac{1}{k!}D^k F(0)(z^k).
\end{align}

Clearly
\begin{align}\label{Th2.9}
\displaystyle	
D F(z)=I+\sum\limits_{k=2}^{\infty}\frac{k}{k!}D^k F(0)(z^{k-1})
\end{align}
and 
\begin{align}\label{Th2.10}
\displaystyle
D^2 F(z)=\sum\limits_{k=2}^{\infty}\frac{k(k-1)}{k!}D^k F(0)(z^{k-2}).
\end{align}

Using \eqref{Th2.9} and \eqref{Th2.10}, we get
\begin{align}\label{Th2.11}
\displaystyle
D^2 F(z)(z^2)+DF(z)
z=&z+\sum\limits_{k=2}^{\infty}\frac{k^2}{k!}D^k F(0)(z^k).
\end{align}

Now substituting \eqref{Th-21.13} into \eqref{Th2.11}, we get
\begin{align}\label{Th2.13}
\displaystyle
D^2 F(z)(z^2)+DF(z)
z=&z+\sum\limits_{k=2}^{\infty}\frac{k^3}{k!}D^{k-1} f(0)(z^{k-1}) z.
\end{align}

Clearly 
\begin{align}\label{Th2.13a}
\displaystyle
\phi(\zeta z_0)=\zeta z_0\left(1+\sum\limits_{m=1}^{\infty}\frac{(m+1)^2}{m!}D^{m} f(0)(z^{m})\zeta^m\right).
\end{align}

Let
\begin{align}\label{Th2.14}
\displaystyle
f(\zeta z_0)=1+\sum\limits_{k=1}^{\infty}\frac{1}{k!}D^k f(0)(z_0^k)\zeta^k.
\end{align}

Clearly
\begin{align}\label{Th2.15}
\displaystyle
D f(\zeta z_0)(\zeta z_0)=\sum\limits_{k=1}^{\infty}\frac{k}{k!}D^k f(0)(z_0^k)\zeta^k.
\end{align}

Suppose $g(\zeta)=f(\zeta z_0)\zeta$. Then we have
\begin{align}\label{Th2.7}
g'(\zeta)=f(\zeta z_0)+Df(\zeta z_0)\zeta z_0.
\end{align}

In view of \eqref{Th2.7} and adding \eqref{Th2.9} and \eqref{Th2.10}, we obtain
\begin{align}\label{Th2.16}
\displaystyle
g'(\zeta)=f(\zeta z_0)+D f(\zeta z_0)\zeta z_0=1+\sum\limits_{k=1}^{\infty}\frac{k+1}{k!}D^k f(0)(z_0^k)\zeta^k.
\end{align}	

Clearly
\begin{align}\label{Th2.17}
\displaystyle
\zeta g''(\zeta)+g'(\zeta)=1+\sum\limits_{k=1}^{\infty}\frac{(k+1)^2}{k!}D^k f(0)(z_0^k)\zeta^k.
\end{align}

Now from \eqref{Th2.5}, \eqref{Th2.13a} and \eqref{Th2.16}, we get
\begin{align}\label{Th2.18}
\displaystyle
\frac{l_{\zeta z_0}\left([D F(\zeta z_0)]^{-1}\phi(\zeta z_0)\right)}{||\zeta z_0||}=\frac{1+\sum\limits_{m=1}^{\infty}\frac{(m+1)^2}{m!}D^{m} f(0)(z^{m})\zeta^m}{1+\sum\limits_{k=1}^{\infty}\frac{k+1}{k!}D^k f(0)(z_0^k)\zeta^k}.
\end{align}

Combining \eqref{Th2.16} and \eqref{Th2.17} to \eqref{Th2.18}, we obtain
\begin{align}\label{Th2.19}
\displaystyle
\frac{l_{\zeta z_0}\left([D F(\zeta z_0)]^{-1}\phi(\zeta z_0)\right)}{||\zeta z_0||}
=&\frac{\zeta g''(\zeta)+g'(\zeta)}{g'(\zeta)}=1+\zeta\frac{
g''(\zeta)}{g'(\zeta)}.
\end{align}

In view of \eqref{Th2.3} and \eqref{Th2.3a}, we get from \eqref{Th2.19} that
\begin{align}\label{Th2.20}
1+\zeta\frac{g''(\zeta)}{g'(\zeta)}=H(\zeta)=\sum\limits_{k=0}^{\infty}H_k\zeta^k,
\end{align}
where $H_0=1$.
Note that 
\begin{align*}
	\frac{1+(1-2\alpha)\zeta}{1-\zeta}=\sum\limits_{k=0}^{\infty}b_k\zeta^k,
\end{align*}
where $b_0=1$, $b_k=2(1-\alpha)$, $k\in\mathbb{N}$. So \eqref{Th2.4} yields $|H_k|\leq b_k$, $k\in\mathbb{N}\cup\{0\}$. Hence
\begin{align*}
H(\zeta)\ll\frac{1+(1-2\alpha)\zeta}{1-\zeta}
\end{align*}
and so \eqref{Th2.20} yields
\begin{align*}
	1+\frac{\zeta g''(\zeta)}{g'(\zeta)}\ll \frac{1+(1-2\alpha)\zeta}{1-\zeta}.
\end{align*}

Consequently 
\begin{align}\label{Th2.21}
g'(\zeta)\ll \phi(\zeta):=\frac{1}{(1-\zeta)^{2(1-\alpha)}}=1+\sum\limits_{k=1}^{\infty}c_k\zeta^k,
\end{align}
where
\begin{align}\label{Th2.22}
	c_k=\frac{(2-2\alpha)(3-2\alpha)\ldots((k+1)-2\alpha)}{k!}
\end{align}
for $k=1,2,3,\ldots$.
Now using \eqref{Th2.16} to \eqref{Th2.21}, we get 
\begin{align*}
\left|\frac{k+1}{k!}D^k f(0)(z_0^k)\zeta^k\right|\leq c_k
\end{align*}
for $k=1,2,3,\ldots$. Consequently from \eqref{Th2.22}, we have
\begin{align}\label{Th2.23}
\left|\frac{k+1}{k!}D^k f(0)(z_0^k)\right|\leq \frac{(2-2\alpha)(3-2\alpha)\ldots((k+1)-2\alpha)}{k!}=\frac{1}{k!}\prod\limits_{m=2}^{k+1}(m-2\alpha)
\end{align}
for $k=1,2,3,\ldots$.
Finally from \eqref{Th2.23} and \eqref{Th-21.14}, we have
\begin{align*}
	\left|\frac{l_{z_0}(D^{k+1}F(0)(z_0^{k+1}))}{(k+1)!}\right|\leq \frac{1}{(k+1)!}\prod\limits_{m=2}^{k+1}(m-2\alpha),
\end{align*}
i.e., for $k=2,3,\ldots$,  we get the desired inequalities \eqref{Th-2.2}.

To establish the sharpness of the bound, let us consider the function $F$ defined by
\begin{align*}
F(z)=zf(z)=\frac{f_1(l_u(z))}{l_u(z)}z,\quad z\in\mathbb{B},\quad \|u\|=1,
\end{align*}
where
\begin{align*}
f_1(l_u(z))=
\begin{cases}
\frac{1-(1-l_u(z))^{2\alpha-1}}{2\alpha-1}, & \alpha\neq \frac{1}{2}, \\[10pt]
-\log (1-l_u(z)), & \alpha=\frac{1}{2}.
\end{cases}
\end{align*}
 We know that $f_1\in\mathcal{K}_{\alpha}(\mathbb{U})$ (see \cite[Problem 2.3.7]{Graham-Kohr}).
According to Lemma 2.1 \cite{Liu-Liu-2010}, we have $F\in\mathcal{K}^{\alpha}_B(\mathbb{B})$.
Suppose $\alpha\neq 1/2$. In this case, we have
\begin{align*}
f(l_u(z))=\frac{1}{2\alpha-1}\sum\limits_{k=0}^{\infty}(-1)^{k+2}\binom{2\alpha-1}{k+1}(l_u(z))^{k}.
\end{align*}

Clearly \eqref{Th-21.13} yields
\begin{align*}
\frac{l_z(D^{k+1}F(0)(z^{k+1}))}{(k+1)!}=(-1)^k\frac{(2\alpha-2)(2\alpha-3)\ldots(2\alpha-k-1)}{(k+1)!}\;(l_u(z))^k\|z\|.
\end{align*}

Taking $z=ru\;(0<r<1)$, we get for $k=2,3,\ldots$
\begin{align*}
	\left|\frac{l_z(D^{k}F(0)(z^{k}))}{k!||z||^{k}}\right|=\frac{1}{k!}\prod\limits_{m=2}^{k}(m-2\alpha),\quad z\in\mathbb{B}\backslash\{0\}.
\end{align*}

Suppose $\alpha=1/2$. In this case, we have
\begin{align*}
	f(z)=\sum_{k=0}^{\infty}\frac{1}{k+1}(l_u(z))^{k},\quad z\in\mathbb{B}.
\end{align*}

Let us take $z=ru\;(0<r<1)$. Now using \eqref{Th-21.13}, we find for $k=2,3,\ldots$ that
\begin{align*}
	\left|\frac{l_z(D^{k}F(0)(z^{k}))}{k!||z||^{k}}\right|=\frac{1}{k},\quad z\in\mathbb{B}\backslash\{0\}.
\end{align*}

\end{proof}

\noindent\textbf{Conflict of interest:} The authors declare that there is no conflict  of interest regarding the publication of this paper.\vspace{1.2mm}

\noindent {\bf Funding:} At present, no funding is available for this work.\vspace{1.2mm}

\noindent\textbf{Data availability statement:}  Data sharing not applicable to this article as no datasets were generated or analysed during the current study.\vspace{1.2mm}


\end{document}